\documentclass[12pt,leqno]{amsart}
% * <agenorandrade@gmail.com> 2017-10-18T22:44:50.621Z:
%
% ^.
\usepackage{amssymb,amsthm,amsmath,latexsym}
\newtheorem{theorem}{\sc Theorem}[section]

\newtheorem{lemma}[theorem]{\sc Lemma}

\newtheorem{problem}[theorem]{\sc Problem}

 \title[Nilpotency Criterion for Nilpotent Residual]{A Sufficient Condition for Nilpotency of the Nilpotent Residual of a Finite Group}
%\title{Crit\'erio de Nilpot\^encia em Comutadores Coprimos}

\author{Agenor Freitas de Andrade} 
\address{Instituto Federal de Educa\c{c}\~ao, Ci\^{e}ncia e Tecnologia de Goi\'as, Luzi\^{a}nia - GO, 72811-580, Brazil}
\email{agenor.andrade@ifg.edu.br}

\author{Alex Carrazedo Dantas}
\address{Universidade Tecnol\'ogica Federal do Paran\'a, Guarapuava - PR, 85053 - 525, Brazil}
\email{alexcdan@gmail.com}
%\author{testeaaa}
\subjclass[2010]{20D05; 20D45.}
\keywords{finite groups, coprime commutators, nilpotent residual}

\begin{document}
%\maketitle
\begin{abstract}
Let $G$ be a finite group with the property that if $a,b$ are powers of $\delta_1^*$-commutators such that $(|a|,|b|)=1$, then $|ab|=|a||b|$. We show that $\gamma_{\infty}(G)$ is nilpotent. %In particular, we conclude that a group with this property is soluble.
\end{abstract}
\maketitle
\section{Introduction}
All groups considered in the present paper are finite. The last term of the lower central series of a group $G$ is called the nilpotent residual. It is usually denoted by $\gamma_\infty(G)$. The lower Fitting series of $G$ is defined by $D_0(G)=G$ and $D_{i+1}(G)=\gamma_\infty(D_i(G))$ for $i=0,1,2,\dots$.

%The coprime commutators $\gamma_k^*$ and $\delta_k^*$ have been introduced in \cite{shumyatsky} by Shumyatsky as a tool to study properties of finite groups that can be expressed in terms of commutators of elements of coprime orders. The definition goes as follows. Every element of $G$ is  both a $\gamma_1^*$-commutator and a $\delta_0^*$-commutator. Now, let $k\geq 2$ and let $S$ be the set of all elements of $G$ that are powers of $\gamma_{k-1}^*$-commutators. An element $g$ is a $\gamma_k^*$-commutator if there exist $a\in S$ and $b\in G$ such that $g=[a,b]$ and $(|a|,|b|)=1$. For $k\geq 1$, let $T$ be the set of all elements of $G$ that are powers of $\delta_{k-1}^*$-commutators. The element $g$ is a $\delta_k^*$-commutator if there exist $a,b\in T$ such that $g=[a,b]$ and $(|a|,|b|)=1$. It was shown in \cite{shumyatsky} that for every $k\geq1$ the subgroup generated by $\gamma_k^*$-commutators is precisely $\gamma_\infty(G)$ and for every $k\geq0$ the subgroup generated by $\delta_k^*$-commutators is precisely $D_k(G)$.

Here the notation $|x|$ stands for the order of the element $x$ in a group $G$. The following criterion of nilpotency of a finite group was established by Baumslag and Wiegold in \cite{bw}.

\begin{theorem}\label{1}
Let $G$ be a finite group in which $|ab|=|a||b|$ whenever the elements $a,b$ have coprime orders. Then $G$ is nilpotent.
\end{theorem}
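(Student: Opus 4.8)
The plan is to argue by contradiction through a minimal counterexample and reduce to the structure of minimal non-nilpotent groups. Suppose the theorem fails and let $G$ be a non-nilpotent group of least order satisfying the hypothesis. Since the order of an element is the same whether it is computed in a subgroup or in $G$, every subgroup $H\le G$ again satisfies the hypothesis; by minimality every proper subgroup of $G$ is therefore nilpotent, so $G$ is a minimal non-nilpotent group.

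Next I would invoke Schmidt's classical description of such groups: a minimal non-nilpotent finite group is a $\{p,q\}$-group $G=P\rtimes Q$ with $P$ a normal Sylow $p$-subgroup, $Q=\langle y\rangle$ a cyclic Sylow $q$-subgroup, and $p\neq q$. Moreover $y$ cannot centralize $P$, for otherwise $G=P\times Q$ would be nilpotent; thus $C_P(y)\neq P$.

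The heart of the argument is then a short conjugation trick. Choose $t\in P$ with $tyt^{-1}\neq y$ (possible since $C_P(y)\neq P$), and set $x:=t\,(yt^{-1}y^{-1})$. Since $P\trianglelefteq G$ we have $yt^{-1}y^{-1}\in P$ and hence $x\in P$; a direct computation gives $xy=tyt^{-1}$, and $x\neq 1$ precisely because $tyt^{-1}\neq y$. Now $xy$ is a conjugate of $y$, so $|xy|=|y|$, a power of $q$. On the other hand $x$ is a non-identity element of the $p$-group $P$, so $|x|$ is a positive power of $p$ and in particular $(|x|,|y|)=1$. Applying the hypothesis to the coprime pair $x,y$ forces $|xy|=|x|\,|y|$, whence $|x|=|xy|/|y|=1$, contradicting $x\neq 1$.

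Thus no minimal counterexample exists and $G$ is nilpotent. I expect the only genuinely external input to be Schmidt's structure theorem, which guarantees that the acting element $y$ and the element $x$ live over two different primes; securing this coprimality is the one place where the two-prime structure of a minimal non-nilpotent group is essential, whereas the remainder is an elementary order computation.
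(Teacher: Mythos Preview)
Your argument is correct. Note, however, that the present paper does not supply its own proof of this statement: Theorem~\ref{1} is quoted as background from Baumslag and Wiegold \cite{bw}, so there is no in-paper proof to compare against. That said, your route---pass to a minimal counterexample, recognise it as a Schmidt (minimal non-nilpotent) group $G=P\rtimes\langle y\rangle$, and then observe that for $t\in P$ not centralising $y$ the element $x=t\,(yt^{-1}y^{-1})\in P\setminus\{1\}$ satisfies $xy=tyt^{-1}$, forcing $|x|=1$ from the hypothesis---is essentially the argument of the original Baumslag--Wiegold note. The only remark worth making is that the full Schmidt classification (cyclicity of $Q$, etc.) is more than you need: all that is used is that a minimal non-nilpotent group has a normal Sylow $p$-subgroup not centralised by some $p'$-element, which already follows from the fact that all maximal subgroups are nilpotent.
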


%In the Theorem \ref{1} and the remainder of this article the symbol $|x|$ stands for the order of the element $x$ in a group $G$.
The following result was obtained by Bastos and Shumyatsky \cite{bs}.

\begin{theorem}\label{2}
Let $G$ be a finite group in which $|ab|=|a||b|$ whenever the elements $a,b$ are commutators of coprime orders. Then $G^{'}$ is nilpotent.
\end{theorem}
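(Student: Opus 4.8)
The plan is to induct on $|G|$, using the Baumslag--Wiegold criterion (Theorem \ref{1}) as the engine, and to prove the equivalent assertion $G' \subseteq F(G)$, where $F(G)$ denotes the Fitting subgroup; recall that $G'$ is nilpotent precisely when it is contained in the largest nilpotent normal subgroup of $G$. So I would take $G$ to be a counterexample of least order and seek a contradiction.

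The first task is to reduce to the soluble case. The clean input here is Ore's theorem, that every element of a finite nonabelian simple group is a commutator. If $S \le G$ were a nonabelian simple subgroup, then any two elements of $S$ of coprime orders would be commutators of $S$, hence commutators of $G$ of coprime orders, so the hypothesis would give $|uv| = |u||v|$ for every coprime pair $u,v \in S$; Theorem \ref{1} would then declare $S$ nilpotent, which is absurd. Upgrading this from simple subgroups to composition factors, so as to conclude that $G$ itself is soluble, is the first genuinely delicate point: passing to a section can shrink element orders, so the order condition does not survive verbatim, and one needs an auxiliary argument controlling the orders of the commutators that lift a prescribed element of a section.

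With $G$ soluble, I would try to show that any two Sylow subgroups of $G'$ centralise one another, which is equivalent to $G'$ being nilpotent. The natural vehicle is coprime action: after the standard reductions one is left with a configuration $P \rtimes Q$ with $P$ a normal $p$-subgroup and $Q$ a $q$-group acting coprimely, and the goal becomes $[P,Q] = 1$. Here the standard coprime-action machinery realises the elements of $[P,\langle q\rangle]$ as commutators $\{[x,q] : x \in P\}$ of $G$; feeding these, together with the elements they produce, into Theorem \ref{1} should force the relevant section to be nilpotent and hence $[P,Q]=1$. Running this over all pairs of primes would give $G' \subseteq F(G)$, contradicting the choice of $G$.

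The decisive obstacle is the mismatch between hypothesis and tool: our assumption restrains only commutators (and, through their products, a little more), whereas Theorem \ref{1} requires the order condition for \emph{all} coprime-order pairs. Since neither powers nor products of commutators are in general commutators, the hypothesis cannot be applied to $G'$ head-on; the heart of the proof is to manufacture, inside the coprime-action configuration, a section all of whose relevant coprime-order elements are realised as commutators of $G$, so that Theorem \ref{1} becomes applicable. I expect this bridging step, alongside the order-control needed to secure solubility, to be where the real difficulty lies.
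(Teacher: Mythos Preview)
The paper does not contain a proof of Theorem~\ref{2}: this theorem is quoted from Bastos and Shumyatsky \cite{bs} as background for the paper's own main result (Theorem~\ref{teo1}), and no argument for it is given anywhere in the text. There is therefore nothing in the present paper against which your proposal can be compared.

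As for the proposal itself, it is explicitly a plan rather than a proof, and you have correctly identified the two places where the argument is incomplete. For the solubility reduction, your idea of invoking Ore's theorem on a nonabelian simple \emph{subgroup} works immediately, but as you note this does not by itself rule out nonabelian simple \emph{composition factors}; one needs a separate argument to pass to sections or, alternatively, to work directly with a minimal nonsoluble configuration. For the soluble step, the difficulty you flag is exactly the crux: the hypothesis concerns commutators, not arbitrary elements of $G'$, and bridging that gap requires producing enough commutators inside the relevant coprime-action section to force centralisation. Until those two steps are supplied, what you have is a reasonable strategic outline but not a proof.
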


In \cite{bs} the following question is posed.
Let $w$ be a commutator word and $G$ a finite group with the property that if $a,b$ are $w$-values of coprime order, then $|ab|=|a||b|$. Is then the verbal subgroup $w(G)$ nilpotent?
% * <agenorandrade@gmail.com> 2017-10-18T22:44:13.120Z:
%
% ^.
A coprime commutator in  a group $G$ is a commutator $[x,y]$ with $|x|$ and $|y|$. The main result of this article is to obtain a similar result of the Theorems \ref{1} and \ref{2}  for the powers of coprime commutators.

\begin{theorem}\label{teo1}
Let $G$ be a finite group and set $$X = \{[a,b]^k | \, a, b \in G, (|a|,|b|)=1, k \in \mathbb{Z}\}.$$
Assume that for all $x,y \in X$ with $|x|$ and $|y|$ coprimes, we have $|xy| = |x||y|$. Then $\gamma_\infty(G)$ is nilpotent. In particular, $G$ has Fitting height at most $2$.
\end{theorem}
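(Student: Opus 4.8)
The plan is to prove the substantive claim that $N := \gamma_\infty(G)$ is nilpotent; the statement about Fitting height is then immediate, since $\gamma_\infty$ of a nilpotent group is trivial, so that $D_2(G)=\gamma_\infty(N)=1$ and $G$ has Fitting height at most $2$. The first ingredient is the known structural fact that $\gamma_\infty(G)$ is generated by the coprime commutators $[a,b]$ with $(|a|,|b|)=1$; these are exactly the elements of $X$ with $k=1$, so $N=\langle X\rangle$. I would record at once the closure properties of $X$: it is closed under taking powers, and since $([a,b]^k)^g=[a^g,b^g]^k$ with $|a^g|=|a|$ and $|b^g|=|b|$, it is a normal subset of $G$. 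Consequently, for each coprime commutator $c$ and each prime $p$ the $p$-part $c_p$ and the $p'$-part $c_{p'}$ are powers of $c$, and so again lie in $X$, giving a plentiful supply of prime-power elements of $X$ on which to test the order hypothesis.

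The core of the proof is the claim $(\ast)$: for every prime $p$, each $p$-element of $X$ centralises each $p'$-element of $X$. Granting $(\ast)$, writing any two coprime-order elements of $X$ as products of their commuting prime-power parts shows that coprime-order elements of $X$ commute. To establish $(\ast)$ I would induct on $|G|$ through a minimal counterexample. The hypothesis descends to quotients $G/K$ — images of coprime commutators are again coprime commutators, and the coprimality needed to invoke the order condition is recovered by the standard coprime lifting of prime-power elements — so by minimality $\gamma_\infty(G/K)$ is nilpotent for every $1\neq K\trianglelefteq G$, which forces a unique minimal normal subgroup and confines the non-nilpotency of $N$ to a single chief factor. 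A preliminary reduction rules out nonabelian composition factors in $N$: a nonabelian simple section has element orders too sparse to sustain $|xy|=|x||y|$ for the relevant coprime commutators, so $N$ is solvable and the Fitting subgroup $F=F(N)$ with $C_N(F)\le F$ becomes available. In this reduced setting a hypothetical non-commuting pair consisting of a $p$-element and a $p'$-element of $X$, together with the equality $|xy|=|x||y|$ and coprime action on $F$, produces exactly the element-order configuration that Theorem \ref{1} forbids, yielding a contradiction and hence $(\ast)$.

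To conclude, I would deduce from $(\ast)$ that each Sylow subgroup of $N$ is normal, so that $N$ is the direct product of its Sylow subgroups and therefore nilpotent. The $p$-parts of the coprime commutators commute with all their $p'$-parts, and the minimal-counterexample reduction together with coprime action is used to control the remaining, same-prime generators, forcing $\langle X_p\rangle$ to be a normal $p$-subgroup of $N$. The step I expect to be the main obstacle is precisely this last propagation: the condition $(\ast)$ only makes the prime-power \emph{pieces} of the generators commute across distinct primes, and passing from ``the generators commute in coprime pairs'' to ``every Sylow subgroup of $N$ is normal'' is not formal, since commuting $p$-elements can still generate a group of non-$p$-power order. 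It is here that the solvability of $N$, the inductive reduction, and the Baumslag--Wiegold criterion must be combined to bring the same-prime generators into line and close the argument.
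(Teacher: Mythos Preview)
Your outline has genuine gaps at each of its three stages. The claim that the hypothesis descends to a quotient $G/K$ is unjustified: given $\bar x,\bar y\in X_{G/K}$ of coprime orders you can indeed lift to $x',y'\in X_G$ of coprime orders and obtain $|x'y'|=|x'||y'|$ in $G$, but this equality does \emph{not} force $x'$ and $y'$ to commute, and nothing prevents the order of $\overline{x'y'}=\bar x\bar y$ from dropping strictly below $|\bar x|\,|\bar y|$ in the quotient. Baumslag--Wiegold--type hypotheses are not known to be quotient-closed, and the paper deliberately inducts only through \emph{subgroups}, where inheritance is immediate. Your solvability reduction (``element orders too sparse'') is likewise not a proof; this is in fact the hard half of the theorem. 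The paper reduces a minimal non-soluble counterexample to a quasisimple group, then invokes the Feit--Thompson theorem, the Frobenius normal $p$-complement criterion to manufacture a $2$-power $\delta_1^*$-commutator $z\notin Z(G)$ with $z^2\in Z(G)$, and finally the Baer--Suzuki theorem to produce a dihedral section on which Lemma~\ref{3} yields the contradiction. None of this is captured by a sparsity heuristic.

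You also correctly flag the propagation from $(\ast)$ to normality of the Sylow subgroups of $N$ as the main obstacle, and your sketch does not overcome it. In the paper the soluble case is handled separately (Lemma~\ref{6}): one first bounds the Fitting height of $G$ by $3$ via induction applied to the subgroup $G'$, so that $D_2(G)$ is already nilpotent and its Sylow subgroups are normal in $G$; then the focal-subgroup theorem for $\delta_k^*$-commutators (Lemma~\ref{4}) generates each Sylow $q$-subgroup of $D_1(G)$ by $q$-power elements of $X$, and Lemma~\ref{3}---which crucially requires the element of $X$ to \emph{normalize a subgroup} of coprime order, not merely to meet a single coprime-order element---then forces the needed centralization. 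Your $(\ast)$ is stated for arbitrary pairs of prime-power elements of $X$ with no normalizer hypothesis, and that missing hypothesis is exactly why it cannot be read off directly from the order condition: for a $p$-element $x\in X$ and a $p'$-element $y\in X$ the commutator $[x,y]$ lies in $X$, but its order need not be prime to $|x|$, so the identity $x[x,y]=x^{y}$ gives no contradiction.
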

The coprime commutators $\gamma_k^*$ and $\delta_k^*$ have been introduced in \cite{shumyatsky} by Shumyatsky as a tool to study properties of finite groups that can be expressed in terms of commutators of elements of coprime orders. The definition of $\delta_k^*$ goes as follows (for more details see \cite{shumyatsky}). Every element of $G$ is a $\delta_0^*$-commutator. Now, let $k\geq 1$, let $T$ be the set of all elements of $G$ that are powers of $\delta_{k-1}^*$-commutators. The element $g$ is a $\delta_k^*$-commutator if there exist $a,b\in T$ such that $g=[a,b]$ and $(|a|,|b|)=1$. It was shown in \cite{shumyatsky} that for every $k\geq 0$ the subgroup generated by $\delta_k^*$-commutators is precisely $D_k(G)$.

%\begin{theorem}\label{teo1}
%Let $G$ be a finite group in which $|ab|=|a||b|$ whenever the elements $a,b$ are powers of $\delta_1^*$-commutators of coprime orders. Then $\gamma_{\infty}(G)$ is nilpotent. 
%\end{theorem}

%An immediate consequence of Theorem \ref{teo1} is the following corollary.
%\begin{corollary}\label{cor1}
%If G is a finite group satisfying the hypotheses of Theorem \ref{teo1} then G is soluble. %In particular,
%\end{corollary}

Remark that the set $X$ in the Theorem \ref{teo1} is precisely the set of generators of $D_1(G)$, that is, $D_1(G) = \langle X \rangle$. Consequently we could ask the following question, which extends Theorem \ref{teo1}.

\begin{problem}\label{teo2}
Let be $k$ a non-negative integer and $G$ a finite group in which $|ab|=|a||b|$ whenever the elements $a,b$ are (powers of) $\delta_k^*$-commutators of coprime orders. It is true that the subgroup $D_k(G)$ is nilpotent? 
\end{problem}

\section{Preliminaries}
Throughout the remaining part of this short note, $G$ denotes a finite group satisfying the hypothesis of Theorem \ref{teo1}. For simplicity, we denote by $X$ the set of all powers of $\delta_1^*$-commutators of $G$. As usual, $O_\pi(M)$ stands for the maximal normal $\pi$-subgroup of a group $M$. The Fitting subgroup of $M$ is denoted by $F(M)$. We start with the following known lemma.

\begin{lemma}[\cite{issacs}, Lemma 4.28]\label{2a}
Let $A$ and $G$ be finite groups. Let $A$ act via automorphisms on $G$, and suppose that $(|A|,|G|)=1$. Then $G = [G,A]C_G(A)$.
\end{lemma}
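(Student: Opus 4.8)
The plan is to deduce the lemma from the \emph{coprime fixed-point lifting} principle and to prove the latter from scratch. Write $K=[G,A]$. Since $K$ is generated by the elements $[g,a]=g^{-1}g^{a}$ with $g\in G$, $a\in A$, it is normal in $G$ and invariant under $A$, so $A$ acts on the quotient $\bar G=G/K$. On $\bar G$ we have $[\bar G,A]=\overline{[G,A]}=1$, so $A$ acts \emph{trivially} on $\bar G$. Consequently the asserted equality $G=[G,A]\,C_G(A)$ is equivalent to the statement that the natural image of $C_G(A)$ in $\bar G$ is all of $\bar G$; that is, every coset of $K$ (each of which is $A$-invariant, since $A$ is trivial on $\bar G$) must contain an element fixed by $A$.

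Thus the heart of the matter is the following lifting claim: \emph{if $A$ acts coprimely on a finite group $H$ and $N\trianglelefteq H$ is $A$-invariant, then $C_{H/N}(A)=C_H(A)N/N$.} One inclusion is immediate, so the task is to show that an $A$-invariant coset $hN$ contains an $A$-fixed element. I would first treat the case $N$ abelian: for $hN$ fixed, the map $\gamma\colon A\to N$ given by $\gamma(a)=h^{-1}h^{a}$ is a $1$-cocycle, and replacing the representative $h$ by $hn$ (with $n\in N$) changes $\gamma$ by a coboundary. Because $\gcd(|A|,|N|)=1$ one has $H^{1}(A,N)=0$ (equivalently, via the averaging argument, one solves the cocycle equation using an inverse of $|A|$ modulo $\exp N$), so $\gamma$ is a coboundary and some $hn$ is $A$-fixed, giving a fixed representative.

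For general $N$ I would reduce to the abelian case by induction along an $A$-invariant normal series of $N$ with abelian factors, lifting fixed points one factor at a time. With the lifting claim in hand, applying it to $H=G$ and $N=K=[G,A]$ and combining with the fact that $A$ is trivial on $\bar G$ yields $\bar G=C_G(A)K/K$, which is exactly $G=[G,A]\,C_G(A)$.

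The step I expect to be the main obstacle is the inductive reduction of the lifting claim in the \emph{non-solvable} case: an $A$-invariant normal series of $N$ with abelian factors need not exist unless $N$ is solvable. Here the coprimality hypothesis is decisive: since $\gcd(|A|,|N|)=1$, the Feit--Thompson theorem forces one of $A$, $N$ to have odd order and hence to be solvable. If $N$ is solvable one inducts on $|N|$ through its abelian factors as above; if instead $A$ is solvable one inducts on $|A|$ through a chief series of $A$, first lifting through a minimal (elementary abelian) normal subgroup $B\trianglelefteq A$ and then passing to the action of $A/B$ on $C_H(B)$. Organizing this case analysis cleanly, rather than any single computation, is the delicate part.
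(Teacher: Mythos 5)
Your argument is essentially correct, but note first that the paper itself gives no proof here: the lemma is quoted verbatim from \cite{issacs}, Lemma 4.28, so the only meaningful comparison is with the proof in that source. Your opening reduction --- pass to $\bar G = G/[G,A]$, observe that $A$ acts trivially there, and deduce the lemma from the coprime lifting claim $C_{H/N}(A)=C_H(A)N/N$ applied with $H=G$ and $N=[G,A]$ --- is exactly how Isaacs derives his Lemma 4.28 from his coprime-lifting result. Where you genuinely diverge is in the proof of the lifting claim itself: Isaacs obtains it from Glauberman's lemma, letting $N\rtimes A$ act on the $A$-invariant coset $hN$ (on which $N$ acts regularly), so that point stabilizers are complements of $N$, and then invoking the conjugacy part of the Schur--Zassenhaus theorem to conjugate $A$ into a stabilizer. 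You instead prove the abelian case by hand via the vanishing of $H^1(A,N)$ (cocycle averaging with an inverse of $|A|$ modulo $\exp N$) and run a double induction. Both routes are standard and both lean on Feit--Thompson to the same extent, since the Schur--Zassenhaus conjugacy statement also needs $N$ or the complement solvable; yours is more self-contained, while the Glauberman route dispatches all cases uniformly in one stroke.

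Two repairs your sketch needs. First, in the solvable-$N$ induction the ``$A$-invariant normal series with abelian factors'' must consist of subgroups normal in $H$, not merely in $N$, or the intermediate quotients do not carry the induced actions you use; taking the derived series of $N$, whose terms are characteristic in $N$ and hence both normal in $H$ and $A$-invariant, fixes this. Second, and more seriously, your solvable-$A$ induction as written has a circular base case: when $A$ is cyclic of prime order $p$, the minimal normal subgroup $B$ equals $A$, so ``first lifting through $B$'' is the very statement being proved. You must supply the base case directly: $A$ permutes the set $hN$, which has cardinality $|N|$ prime to $p$, and since every orbit has size $1$ or $p$, some orbit is a singleton, i.e., an $A$-fixed point. (The rest of your inductive step is sound: the $B$-fixed points of $hN$ form a single coset $h_0\,C_N(B)$ inside $C_H(B)$, this coset is invariant under the coprime action of $A/B$, and induction on $|A|$ applies.) With that one line added, the induction closes and your proof is complete.
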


Analogously to Lemma 3 of \cite{bs}, we have:

\begin{lemma}\label{3}
Let $x \in X$ and $N$ be a subgroup normalized by $x$. If $(|x|, |N|)=1,$ then $[x,N]=1$
\end{lemma}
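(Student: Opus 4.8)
The plan is to prove the sharper statement that $x$ centralizes every element of $N$ individually, exploiting the multiplicative-order hypothesis of Theorem~\ref{teo1} together with the fact that conjugation preserves element order. I would fix an arbitrary $n \in N$, set $y := [x,n]$, and argue that $y$ must be trivial.

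First I would record two structural facts about $y$. Since $x$ normalizes $N$ and $n \in N$, the commutator $y = x^{-1}n^{-1}xn = (x^{-1}n^{-1}x)\,n$ is a product of two elements of $N$, hence $y \in N$; as $|y|$ then divides $|N|$, the coprimality $(|x|,|N|)=1$ yields $(|x|,|y|)=1$. Second, I claim $y \in X$. Indeed $x$ is a power of a $\delta_1^*$-commutator and $n$ is an arbitrary element of $G$, so both are in particular powers of $\delta_0^*$-commutators; and because $n \in N$ forces $(|x|,|n|)=1$, the commutator $[x,n]$ of these two coprime-order elements is by definition a $\delta_1^*$-commutator, so $y \in X$.

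With these facts the hypothesis applies to the pair $x,y \in X$, which have coprime orders, giving $|xy| = |x|\,|y|$. On the other hand the identity $x^n = x\,[x,n] = xy$ exhibits $xy$ as a conjugate of $x$, so $|xy| = |x^n| = |x|$. Comparing the two values forces $|y| = 1$, that is $[x,n] = 1$. Since $n \in N$ was arbitrary, this gives $[x,N] = 1$.

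The computation is short; the one point deserving care — and the crux of the argument — is the verification that $y = [x,n]$ itself lies in $X$, so that the order hypothesis may legitimately be invoked for the pair $(x,y)$ rather than only for $x$. This rests on the two consequences of $(|x|,|N|)=1$: that $(|x|,|n|)=1$ makes $[x,n]$ a genuine $\delta_1^*$-commutator, and that $y \in N$ makes $|y|$ coprime to $|x|$. Everything else reduces to the elementary but decisive observation that replacing $x$ by its conjugate $x^n$ leaves the order unchanged.
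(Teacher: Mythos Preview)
Your proof is correct and follows essentially the same route as the paper: show that $[x,n]$ is itself a $\delta_1^*$-commutator of order coprime to $|x|$, invoke the hypothesis on the pair $(x,[x,n])$, and compare with $|x^n|=|x|$. The paper's version is terser (it leaves implicit that $[x,n]\in N$ and hence has order coprime to $|x|$), but the argument is the same.
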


\begin{proof}
Let $y \in N$. Then $[x, y]$ is a $\delta_{1}^{*}$-commutator and $x[x, y] = x^{y}$. Thus $|x[x, y]| = |x||[x, y]|$ and $|x[x, y]| = |x|$, this is, $[x, y] = 1$.
\end{proof}

The following result is the version of the well-know Focal Subgroup Theorem for $\delta_k^*$-commutators of a finite group.

\begin{lemma}[\cite{cpa}, Lemma 2.6]\label{4}
Let $k\geq 0$ and let $G$ be a finite soluble group of order $p^an$, where $p$ is a prime and $n$ is not divisible by $p$, and let $P$ be a Sylow $p$-subgroup of $G$. Then $P \cap \delta_k^*(G)$ is generated by $n$th powers of $\delta_k^*$-commutators lying in $P$.
\end{lemma}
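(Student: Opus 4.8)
The plan is to follow the template of the classical Focal Subgroup Theorem, replacing the transfer's ``focal elements'' by $n$th powers of $\delta_k^*$-commutators. Write $\Delta$ for the set of $\delta_k^*$-commutators of $G$ and put $\Phi = \langle g^n : g \in \Delta,\ g^n \in P\rangle$. Since each $g \in \Delta$ lies in $\delta_k^*(G)$, we have $g^n \in P \cap \delta_k^*(G)$ whenever $g^n \in P$, so $\Phi \le P \cap \delta_k^*(G)$; this is the easy inclusion. Moreover $\Delta$ is closed under conjugation (a conjugate of a $\delta_k^*$-commutator is again one, orders being preserved), and for $u \in P$ one has $(g^n)^u = (g^u)^n$ with $g^u \in \Delta$ and $(g^u)^n \in P$; hence $\Phi \trianglelefteq P$. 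The whole difficulty is the reverse inclusion $P \cap \delta_k^*(G) \le \Phi$.

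First I would record the number-theoretic fact underlying everything: because $(n,p)=1$ and every element order divides $p^an$, for any $g$ the power $g^n$ generates the cyclic group $\langle g_p\rangle$ attached to the $p$-part $g_p$ of $g$; thus if $g^{\ell}$ is a $p$-element then $g^{\ell}\in\langle g^n\rangle$, and if in addition $g^{\ell}\in P$ then $g^n\in P$ as well. Using this, I would run the transfer homomorphism $V\colon G \to P/(P'\Phi)$ into the abelian $p$-group $P/(P'\Phi)$. Evaluating $V$ on a $\delta_k^*$-commutator $g$ via the orbit decomposition of $\langle g\rangle$ on the right cosets of $P$, each orbit of length $\ell_i$ with representative $t_i$ contributes the factor $t_i g^{\ell_i} t_i^{-1} = (g^{t_i})^{\ell_i}\in P$. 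As this is a $p$-element and $g^{t_i}\in\Delta$, the remark gives $(g^{t_i})^n\in P$ and $(g^{t_i})^{\ell_i}\in\langle (g^{t_i})^n\rangle\le\Phi$. Hence $V(g)=1$, and since $\Delta$ generates $\delta_k^*(G)$ we obtain $\delta_k^*(G)\le\ker V$.

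It remains to show that $V$ restricted to $P$ is, modulo $\Phi$, the $n$th power map, for then $x\in P\cap\delta_k^*(G)$ forces $x^n\in\Phi$ and hence $x\in\Phi$, the $n$th power map being bijective on the $p$-group $P/\Phi$. This is exactly where the naive transfer evaluation leaves an error: one only gets $V(x)\equiv x^n$ modulo the focal subgroup $\mathrm{Foc}_G(P)=P\cap G'$, and the focal terms must still be shown to lie in $\Phi$. I expect this to be the main obstacle, and I would dispose of it by induction on $|G|$, using solubility. Choose a minimal normal subgroup $M$, which is an elementary abelian $q$-group. When $q\ne p$ the projection $P\to PM/M$ is an isomorphism compatible with $\Phi$ and with $\delta_k^*$, and the inductive hypothesis in $G/M$ transfers back verbatim. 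When $q=p$ and $M\le\delta_k^*(G)$ the inductive hypothesis gives $P\cap\delta_k^*(G)=\Phi M$, so the whole problem collapses to proving $M\le\Phi$; here I would invoke the coprime action of Lemma \ref{2a}: taking a Hall $p'$-subgroup $H$ normalizing $M$, write $M=[M,H]\,C_M(H)$, recognize each generator $[m,h]$ of $[M,H]$ (with $m$ a $p$-element and $h$ a $p'$-element) as a $\delta_k^*$-commutator whose $n$th power is its $p$-part, and absorb the fixed points $C_M(H)$ by minimality. Making the identification ``$[m,h]$ is a genuine $\delta_k^*$-commutator'' precise for $k>1$ --- where $m$ and $h$ must be exhibited as powers of $\delta_{k-1}^*$-commutators --- is the delicate point, and is the step that genuinely uses the machinery of \cite{shumyatsky} linking $\delta_k^*$-commutators to $D_k(G)=\gamma_\infty(D_{k-1}(G))$.
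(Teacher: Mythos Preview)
The paper does not give a proof of this lemma at all: Lemma~\ref{4} is simply quoted from \cite{cpa} (their Lemma~2.6) and used as a black box, so there is no in-paper argument to compare your attempt against. For the actual proof you have to look at \cite{cpa}; there the argument is organised around Hall subgroups of the soluble group and induction on $k$ through the description $D_k(G)=\gamma_\infty(D_{k-1}(G))$, rather than through the transfer map.

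On the substance of your sketch: the transfer-plus-minimal-normal-subgroup strategy is plausible, but two steps are not yet correct. First, the implication ``$g^{\ell}\in P\Rightarrow g^{n}\in P$'' is false in general (take $g^{\ell}=1$ while $g_p\notin P$), so you cannot deduce that each transfer contribution $(g^{t_i})^{\ell_i}$ lies in $\Phi$; you only get $(g^{t_i})^{\ell_i}\in\langle(g^{t_i})^{n}\rangle$, and a power of $(g^{t_i})^{n}$ is the $n$th power of a \emph{power} of a $\delta_k^*$-commutator, which need not itself be a $\delta_k^*$-commutator. Second, in the inductive step with $q=p$ you handle only the case $M\le\delta_k^*(G)$; the complementary case $M\cap\delta_k^*(G)=1$ still needs an argument, and in the case you do treat, recognising $[m,h]$ as a genuine $\delta_k^*$-commutator for $k\ge2$ is exactly the nontrivial content supplied by the machinery in \cite{shumyatsky} and \cite{cpa} --- it is not a detail one can wave through.
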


The next theorem will also be useful. Recall the the Fitting height $h=h(G)$ of a finite soluble group $G$ is the minimal number $h$ such that $G$ possesses a normal series all of whose quotients are nilpotent.

\begin{theorem}[\cite{shumyatsky}, Theorem 2.6]\label{5}
Let $G$ be a finite group and $k$ a positive integer. We have $D_k(G) = 1$ if and only if $G$ is soluble with Fitting height at most $k$.
\end{theorem}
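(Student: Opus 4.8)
The plan is to prove the two implications separately, and to isolate at the outset the single structural fact that drives both: the nilpotent residual is compatible with homomorphic images, in the sense that $\phi(\gamma_\infty(H)) = \gamma_\infty(\phi(H))$ for any homomorphism $\phi$ defined on $H$. This is immediate once one recalls that in a finite group $\gamma_\infty(H) = \gamma_n(H)$ for all sufficiently large $n$, and that $\phi$ carries each term of the lower central series onto the corresponding term of the image, i.e. $\phi(\gamma_n(H)) = \gamma_n(\phi(H))$. I would also record that every $D_i(G)$ is characteristic in $G$: this follows by induction on $i$, since $\gamma_\infty$ is a characteristic subgroup and a characteristic subgroup of a characteristic subgroup is characteristic. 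Consequently the lower Fitting series is a genuine normal series of $G$.

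For the forward direction, suppose $D_k(G) = 1$. Then
$$G = D_0(G) \ge D_1(G) \ge \dots \ge D_k(G) = 1$$
is a normal series of $G$ whose successive quotients are $D_i(G)/D_{i+1}(G) = D_i(G)/\gamma_\infty(D_i(G))$. By the very definition of the nilpotent residual each such quotient is nilpotent, hence soluble; therefore $G$ is soluble, and since the displayed series has length $k$, the Fitting height of $G$ is at most $k$.

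For the converse, assume $G$ is soluble with $h(G) \le k$, so that $G$ admits a normal series
$$G = G_0 \ge G_1 \ge \dots \ge G_k = 1$$
all of whose quotients $G_i/G_{i+1}$ are nilpotent (padding with repeated trivial terms if $h(G) < k$). I would show by induction on $i$ that $D_i(G) \le G_i$. The case $i = 0$ is trivial. For the inductive step, assume $D_i(G) \le G_i$ and let $\pi\colon G_i \to G_i/G_{i+1}$ be the natural projection. Then $\pi(D_i(G))$ is a subgroup of the nilpotent group $G_i/G_{i+1}$, hence nilpotent, so $\gamma_\infty(\pi(D_i(G))) = 1$. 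Applying the compatibility fact gives $\pi(\gamma_\infty(D_i(G))) = \gamma_\infty(\pi(D_i(G))) = 1$, whence $D_{i+1}(G) = \gamma_\infty(D_i(G)) \le \ker \pi = G_{i+1}$. Taking $i = k$ yields $D_k(G) \le G_k = 1$.

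The step I expect to carry the most weight is the compatibility of $\gamma_\infty$ with homomorphic images invoked in the inductive step; once that identity is in hand, both directions are short. A secondary point requiring care is the normality in $G$ of the terms $D_i(G)$, which is exactly what makes the lower Fitting series an admissible witness for the Fitting height, but this follows routinely from the characteristicity of the nilpotent residual noted at the start.
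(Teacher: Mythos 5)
Your proof is correct, but note that the paper itself contains no proof of this statement to compare against: it is quoted as a known result from \cite{shumyatsky} (Theorem 2.6). What you have written is the standard self-contained proof of the classical fact that the lower Fitting series detects Fitting height, and it is sound: the load-bearing identity $\phi(\gamma_\infty(H))=\gamma_\infty(\phi(H))$ follows, as you say, from stabilization of the lower central series together with $\phi(\gamma_n(H))=\gamma_n(\phi(H))$, and the induction $D_i(G)\le G_i$ is exactly the right way to run the converse. Two small remarks. First, in the forward direction the nilpotency of $D_i(G)/\gamma_\infty(D_i(G))$ is not quite ``the very definition'' of the nilpotent residual as the paper states it (the last term of the lower central series); it needs the one-line observation that $\gamma_\infty(H)=\gamma_n(H)=\gamma_{n+1}(H)$ for large $n$, so that $H/\gamma_\infty(H)$ has lower central series reaching the identity --- your compatibility fact gives this instantly via $\gamma_\infty(H/\gamma_\infty(H))=1$. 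Second, a caveat on scope: as phrased in this paper the theorem is a statement about $D_k(G)$, which your argument fully proves; in Shumyatsky's original, however, the theorem concerns the subgroup generated by $\delta_k^*$-commutators, and its real content is the nontrivial equality of that subgroup with $D_k(G)$, which your proof does not (and, given the phrasing here, need not) address. So you have proved the quoted statement by the elementary route, while the cited source carries the deeper coprime-commutator information that the rest of the paper actually exploits.
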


Our first result ensures that the theorem is valid for soluble groups. Recall that a finite group $G$ is metanilpotent if and only if $\gamma_{\infty}(G)$ is nilpotent.

\begin{lemma} \label{6}
If $G$ is soluble, then $\gamma_{\infty}(G)$ is nilpotent.
\end{lemma}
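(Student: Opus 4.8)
The plan is to proceed by induction on the order of $G$ and reduce to understanding the structure of the nilpotent residual $\gamma_\infty(G)$ via Sylow analysis. Since $G$ is soluble, Theorem \ref{5} tells us it suffices to show $D_2(G) = 1$, i.e. that $\gamma_\infty(\gamma_\infty(G)) = 1$, since this is equivalent to $\gamma_\infty(G)$ being nilpotent. First I would set $N = \gamma_\infty(G)$ and observe that, being the nilpotent residual, $N$ is generated by the $\delta_1^*$-commutators, hence $N = \langle X \cap N \rangle$ and every element of $X$ lying in $N$ is a power of a coprime commutator. The strategy is to prove that $N$ is nilpotent by showing that each Sylow $p$-subgroup of $N$ is normal in $N$ (equivalently, that the Sylow subgroups are pairwise permutable and $N$ is the direct product of them).

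The key engine is Lemma \ref{3} together with the focal-type subgroup result Lemma \ref{4}. For a fixed prime $p$, let $P$ be a Sylow $p$-subgroup of $N$. By Lemma \ref{4} applied with $k=1$, the intersection $P \cap \delta_1^*(N) = P \cap N = P$ is generated by $n$th powers of $\delta_1^*$-commutators lying in $P$, where $n = |N|_{p'}$. These generators are elements of $X$ that are $p$-elements. The plan is to exploit the coprimality hypothesis: if $g \in X$ is a $p$-element and $h \in X$ is a $p'$-element, then $|gh| = |g||h|$ whenever their orders are coprime, which forces strong commuting relations. Combined with Lemma \ref{3} — which says a power of a coprime commutator centralizes any subgroup of coprime order it normalizes — I would aim to show that the $p$-part and $p'$-part of $N$ interact trivially enough that $P$ is normal.

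Concretely, I would argue as follows. Using Lemma \ref{2a} (coprime action) on a minimal normal subgroup or a suitable Hall subgroup, I would set up a coprime decomposition of $N$. The central claim I want is that any two $\delta_1^*$-commutator generators of $N$ of coprime orders commute; from the hypothesis $|xy| = |x||y|$ for coprime $x, y \in X$, this identity on orders propagates to force elementwise commuting once one knows (via Theorem \ref{1}, the Baumslag–Wiegold criterion) that a group in which all coprime pairs multiply orders is nilpotent. The idea is to apply Theorem \ref{1} not to $G$ itself but to an appropriate section or to $N$ viewed through its generating set, deducing that the subgroup generated by the $p'$-part commutes with $P$. This would give $N = P \times O_{p'}(N)$-type splitting across all primes, yielding nilpotency of $N$.

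The hard part will be bridging the gap between the order-theoretic hypothesis (which a priori only constrains pairs of elements drawn from the generating set $X$) and an honest structural conclusion about all elements of $N$. The hypothesis does not immediately say that two arbitrary elements of $N$ of coprime order multiply orders; it only controls powers of coprime commutators. So the main obstacle is verifying that the Sylow subgroups of $N$ are generated by elements of $X$ to which the hypothesis applies, and then promoting commuting of generators to commuting of the whole Sylow subgroups. I expect Lemma \ref{4} to be exactly what closes this gap, since it guarantees $P$ is generated by $\delta_1^*$-commutator powers, putting all the relevant generators inside $X$; the remaining work is a careful coprime-action argument using Lemma \ref{3} to show these generators centralize the complementary Hall subgroup, after which normality of each Sylow subgroup of $N$ — and hence nilpotency of $N = \gamma_\infty(G)$ — follows.
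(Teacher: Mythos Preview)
Your outline identifies the right tools (Lemmas~\ref{3} and~\ref{4}) but it is missing the one step that actually makes them bite. You say you will ``proceed by induction on the order of $G$'' and then never invoke the inductive hypothesis; instead you try to prove directly that every Sylow subgroup of $N=\gamma_\infty(G)$ is normal. The difficulty is that Lemma~\ref{3} requires the element $x\in X$ to \emph{normalize} a subgroup of coprime order, and at no point in your sketch do you produce such a normal subgroup. A Hall $p'$-subgroup of $N$ is not a priori normalized by the $p$-elements generating $P$, so the sentence ``by Lemma~\ref{3} these generators centralize the complementary Hall subgroup'' does not go through. Likewise, the appeal to Theorem~\ref{1} on ``$N$ viewed through its generating set'' is not valid: Baumslag--Wiegold needs the order-multiplicativity hypothesis for \emph{all} coprime pairs in the group, not just for a generating set, and that is exactly the gap you yourself flag but do not close.

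The paper's proof fills this gap with a genuine use of induction. Since $G$ is soluble and nontrivial, $G'$ is a proper subgroup still satisfying the hypothesis, so by induction $D_1(G')$ is nilpotent; hence $G'$ has Fitting height at most~$2$, $G$ has Fitting height at most~$3$, and therefore $H:=D_2(G)$ is nilpotent by Theorem~\ref{5}. Now the Sylow subgroups of $H$ are characteristic in $H$, hence \emph{normal in $G$}. This is the normal subgroup that Lemma~\ref{3} needs: if $P$ is a Sylow $p$-subgroup of $H$ and $Q$ a Sylow $q$-subgroup of $D_1(G)$ with $q\neq p$, then Lemma~\ref{4} gives generators of $Q$ in $X$, each of which normalizes $P$ (since $P\trianglelefteq G$) and so centralizes $P$ by Lemma~\ref{3}. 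From $[P,Q]=1$ one then argues, via the metanilpotency of $D_1(G)$ and a Fitting-type product argument, that $Q\le F(G)$; running over all primes finishes the proof. The missing idea in your plan is precisely this preliminary inductive reduction to ``$D_1(G)$ metanilpotent'', which manufactures the normal target for Lemma~\ref{3}.
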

\begin{proof}
Recall that $D_1(G) = \gamma_\infty(G)$. Suppose that $|G|>1$. As $G$ is soluble we have that $|G'| < |G|$. Then by induction on $|G|$ we can assume that $D_1(G')$ is nilpotent. Thus $D_{2}(G') = 1$ and $G'$ has Fitting height $\leq 2$. So $G$ has Fitting height $\leq 3$. Then $D_{3}(G) = 1$ (Theorem \ref{5}). This implies that $D_{2}(G)$ is nilpotent. In particular $D_1(G)$ is metanilpotent. By simplicity, denote $D_{2}(G)$ by $H$. Therefore if $P$ is a Sylow $p$-subgroup of $H$ then $P$ is characteristic in $H$. Obviously $H$ is characteristic in $G$ and so we can conclude that $P$ is characteristic in $G$. In particular, $P \unlhd G$. Suppose that $D_1(G)$ is not a $p$-group. Let $Q$ be a Sylow $q$-subgroup of $D_1(G)$, with $q \neq p$. By Lemma \ref{4} $Q$ is generated by $n$th powers of $Q \cap X$, where $|D_1(G)| = q^an$ and $(q,n)=1$. Thus using Lemma \ref{3} we have that $[P,x]=1$ for all generators $x$ of $Q$. Consequently $QP$ is normal in $QH$. Since $D_1(G)$ is a metanilpotent we have $QH \unlhd D_1(G)$ and then it follows that 
$$(QH)^G = \displaystyle \prod_{g \in G}(QH)^g$$
is nilpotent, because every $(QH)^g$ is nilpotent and normal in $D_1(G)$. Then we conclude that $Q \leq F(G)$. If $|\pi(H)| > 1$ then all Sylow $q$-subgroups of $D_1(G)$ are normal and so the result follows. If $H$ is a $p$-group, then the Sylow $p$-subgroup of $D_1(G)$ is normal (because $D_1(G)$ is metanilpotent) and, applying the same argument now to the Sylow subgroups of $D_1(G)$, we can conclude that they are all normal, as desired.
\end{proof}

Recall that a group $M$ is said to be quasisimple if $M=M'$ and $M/Z(M)$ is simple. We are now ready to complete the proof of Theorem \ref{teo1}.
\section{The proof of Theorem \ref{teo1}}
\begin{proof}
We affirm that $G$ is soluble. Indeed, we assume otherwise. Let $G$ be a counter-example of minimal order. So all proper subgroups in $G$ are soluble and we can assume that $G = D_1(G)$. Let $R$ be the soluble radical in $G$. By Lemma \ref{6} $D_1(R)$ is nilpotent. Suppose that $G$ is nonsimple and $R \neq 1$. Let $P$ to be Sylow $p$-subgroup of $F(G)$. Let $q \neq p$ be a prime number and denote by $T$ the subgroup generated by all $\delta_1^*$-commutators whose order is a power of $q$. By Lemma \ref{4}, all $q$-subgroups of $D_1(G)$ are contained in $T$. Then $G/T$ is a $p$-group. Since $G=D_1(G)$, we conclude that $G=T$ (indeed, if $G \neq T$, then $T$ is soluble and this implies that $G$ is soluble, a contradiction). Now if $x$ is a $\delta_1^*$-commutator of a Sylow $q$-subgroup of $G$, with $q \neq p$, we conclude by Lemmas \ref{3} and \ref{4} that $[P,x]=1$. Then $C_{G}(P)$ is a normal subgroup of $G$, and $G/C_{G}(P)$ is a $p$-group, that is, $G = C_{G}(P)$ and $P \leq Z(G)$. Consequently $F(G) \leq Z(G)$. Since $R$ is soluble it follows by Lemma \ref{6} that $D_1(R)$ is nilpotent. Clearly $D_1(R) \unlhd G$. Then $D_1(R) \leq F(G) \leq Z(G)$. In particular, $D_1(R) \leq Z(R)$. Thus $R$ is nilpotent and therefore $R = Z(G)$. Thus, our group $G$ is quasisimple. By the Feit-Thompson Theorem,  $G$ has a Sylow $2$-subgroup $S$ which is not contained in the centre of $G$. By the Frobenius Theorem \cite[Theorem 7.4.5]{gorenstein} there exists a $2$-subgroup $L$ of $S$ such that $N_G(L)$ contains an element $b$ of odd order such that $[b,t] \notin Z(G)$, for all $t \in L$. Then $[b,t]$ is of $2$-power order. Let $k$ such that $z=[b,t]^k \notin Z(G)$ and $z^2 \in Z(G)$. We have $z \in X$. By the Baer-Suzuki Theorem \cite[Theorem 3.8.2]{gorenstein} there exists $h \in G$ such that $\langle z,z^h \rangle Z(G)/Z(G)$ is a dihedral group of order $2n$ for $n$ odd. Let $J = \langle z z^h \rangle Z(G)$. Then $J/Z(G)$ is inverted by $z$ and $J$ is nilpotent. However Lemma \ref{3} shows that $z$ centralizes $O_{2'}(J)$ a contradiction. The proof is complete.  %Note now that $R_{x} = \langle R, x \rangle$ is soluble because it is a proper subgroup of $G$

\end{proof}

\end{document}